\newtheorem{theorem}{Theorem}[section]
\newtheorem{corollary}[theorem]{Corollary}
\newtheorem{lemma}[theorem]{Lemma}
\title{Upper bound for the number of privileged words}
\author{Josef Rukavicka\thanks{Department of Mathematics,
Faculty of Nuclear Sciences and Physical Engineering, CZECH TECHNICAL UNIVERSITY
IN PRAGUE
(josef.rukavicka@seznam.cz).}}
\newtheorem{definition}[theorem]{Definition}
\theoremstyle{remark}
\newtheorem{example}[theorem]{Example}
\newtheorem{remark}[theorem]{Remark}
\newcommand{\Factor}{F}
\newcommand{\Alphabet}{A}
\newcommand{\bound}{D}
\newcommand{\priv}{B}
\newcommand{\ktwo}{2}
\newcommand{\cca}{c_1}
\newcommand{\ccb}{c_2}
\newcommand{\ccc}{c_3}
\newcommand{\ccd}{c_4}
\newcommand{\cce}{c_5}
\newcommand{\ccf}{c_6}
\DeclareMathOperator{\up}{upb}
\date{\small{Mai 25, 2022}\\
   \small Mathematics Subject Classification: 68R15}
\begin{document}
\maketitle

\begin{abstract}
A non-empty word $w$ is a \emph{border} of a word $u$ if $\vert w\vert<\vert u\vert$ and $w$ is both a prefix and a suffix of $u$. A word $u$ is \emph{privileged} if $\vert u\vert\leq 1$ or if $u$ has a privileged border $w$ that appears exactly twice in $u$. 
Peltomäki (2016) presented the following open problem: ``Give a nontrivial upper bound for $B(n)$'', where $B(n)$ denotes the number of privileged words of length $n$. 

Let $\ln^{[0]}{(n)}=n$ and let $\ln^{[j]}{(n)}=\ln{(\ln^{[j-1]}{(n)})}$, where $j,n$ are positive integers.
We show that if $q>1$ is a size of the alphabet and $j\geq 3$ is an integer then there are constants $\alpha_j$ and $n_j$ such that \[B(n)\leq \alpha_j\frac{q^{n}\sqrt{\ln{n}}}{\sqrt{n}}\ln^{[j]}{(n)}\prod_{i=2}^{j-1}\sqrt{\ln^{[i]}(n)}\mbox{, where }n\geq n_j\mbox{.}\] This result improves the upper bound of Rukavicka (2020).
\end{abstract}

\section{Introduction}
Let $u,w$ be non-empty words. We say that $w$ is a \emph{border} of $u$ if $\vert w\vert<\vert u\vert$ and $w$ appears as both a prefix and a suffix of $u$.
Let \[\Theta(u)=\{w\mid w\mbox{ is a border of }u\}\mbox{.}\] We say that $w$ is the \emph{maximal border} of $u$  if for every $\overline w\in\Theta(u)$ we have that $\vert \overline w\vert\leq \vert w\vert$.  
We say that $u$ is \emph{closed} if there is $w\in\Theta(u)$ such that $u$ contains exactly two occurrences of $w$; realize that these two occurrences are a prefix and a suffix of $u$.  

We say that $u$ is \emph{privileged} if $\vert u\vert\leq 1$ or if there is $w\in\Theta(u)$ such that $w$ is privileged and $w$ appears exactly twice in $u$. It is clear that every privileged word $u$ with $\vert u\vert>1$ is also a closed word.

The closed and privileged words attracted some attention in recent years  \cite{KeLeSa2013}, \cite{Pelto2013}, \cite{ScSh16}. To find a lower and an upper bound for the number of privileged words are two topics that have been researched. 
Concerning the lower bound, it was shown that there are constants $c$ and $n_0$ such that for all $n>n_0$, there are at least $\frac{cq^n}{n(\log_q{n})^2}$ privileged words of length $n$ \cite{Nicholson2018_priviligedwords}, where $q$ denote the size of the alphabet in question. This improves the lower bound for the number of privileged words from \cite{FoJaPeSha2016}. 

Let $\priv(n)$ denote the number of privileged words of length $n$. As for an upper bound for the number of privileged words, the following open problem can be found  \cite{Pelto2016}: ``Give a nontrivial upper bound for $\priv(n)$''.

Let $\bound(n)$ denote the number of closed words of length $n$. 
In \cite{RUKAVICKA2020105917}, it was shown that if $q>1$ is a size of the alphabet then there is a positive real constant $c$ such that \begin{equation}\label{dyhc67vzcxk}\bound(n)\leq c\ln{n}\frac{q^{n}}{\sqrt{n}}\mbox{, where }n>1\mbox{.}\end{equation} Since every privileged word $u$ with $\vert u\vert>1$ is also a closed word, we have that (\ref{dyhc67vzcxk}) is also an upper bound for the number of privileged words. 
Hence, the upper bound (\ref{dyhc67vzcxk}) gave a response to the open problem in \cite{Pelto2016}.  
\begin{definition}
Let $\mathbb{N}$ denote the set of positive integers and let $\mathbb{R}$ denote the set of real numbers.
Let $\ln^{[0]}{(n)}=n$ and let $\ln^{[j]}{(n)}=\ln{(\ln^{[j-1]}{(n)})}$, where $j,n\in\mathbb{N}$.

Given $j\in\mathbb{N}$, let $\sigma^{[j]},\rho^{[j]}:\mathbb{N}\rightarrow\mathbb{R}$ be functions defined as follows: 
\[\sigma^{[1]}(n)=\sqrt{\ln{n}}\mbox{.}\]
\[\sigma^{[2]}(n)=\ln^{[2]}{(n)}\mbox{.}\]
\[\sigma^{[j]}(n)=\ln^{[j]}{(n)}\prod_{i=2}^{j-1}\sqrt{\ln^{[i]}(n)}\mbox{, where }j\geq 3\mbox{.}\]
\[\rho^{[j]}(n)=\sigma^{[j]}(n)\frac{\sqrt{\ln{n}}}{\sqrt{n}}\mbox{, where }j\in\mathbb{N}\mbox{.}\]
\end{definition}

In the current article we improve the upper bound (\ref{dyhc67vzcxk}) for the number of privileged words. We prove the following theorem.
\begin{theorem}
\label{jhd8kjenj5d4g}
If $q>1$ is a size of the alphabet and $j\in\mathbb{N}$ then there are constants $\alpha_j$ and $n_j$ such that \[\priv(n)\leq \alpha_j\rho^{[j]}(n)q^{n}\mbox{, where }n\geq n_j\mbox{.}\]
\end{theorem}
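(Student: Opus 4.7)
The plan is to induct on $j$. For the base case $j=1$ we have $\rho^{[1]}(n)=\sqrt{\ln n}\cdot\sqrt{\ln n}/\sqrt{n}=\ln n/\sqrt{n}$, so the inequality $\priv(n)\le\bound(n)$ for $n>1$ together with (\ref{dyhc67vzcxk}) gives the base case with $\alpha_1=c$ and $n_1=2$.

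For the inductive step from $j$ to $j+1$ I would exploit the self-similar structure of privileged words: every privileged $u$ of length $n>1$ has a privileged border $w$ appearing exactly twice in $u$, and $w$ is itself privileged, hence controlled by the inductive hypothesis. Fixing a distinguished privileged border $w(u)$ of $u$ and writing $m=|w(u)|$, one splits
\[
\priv(n)\le\sum_{m=1}^{n-1}\#\{u\in\mathcal{P}_n:|w(u)|=m\}
\]
into a short-border regime ($m\le n/2$), where $u=wxw$ with $x\in\Alphabet^{n-2m}$ and the tail $\sum_{m}\priv(m)q^{n-2m}$ converges geometrically in $q$ and is negligible, and a long-border regime ($m>n/2$) where the two copies of $w$ overlap, so $u$ is determined by $w$ together with the shift $p=n-m<n/2$.

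The crux is the long-border regime. A naive single-scale use of the inductive bound $\priv(m)\le\alpha_j\rho^{[j]}(m)q^m$ in $\sum_{p}\priv(n-p)$ only reproduces $\rho^{[j]}$ up to a geometric factor, so a more refined analysis is required. The natural strategy is a dyadic decomposition of the range of $p$ combined with a Cauchy--Schwarz inequality (or an iteration of the recursive border structure, viewing $w$ as itself decomposable via a privileged border): the averaging inequality should extract a $\sqrt{\ln^{[j]}(n)}$ saving from the slow variation of $\rho^{[j]}$, while paying a factor $\ln^{[j+1]}(n)$ for the number of scales. Together these produce exactly the target ratio $\rho^{[j+1]}(n)/\rho^{[j]}(n)=\ln^{[j+1]}(n)/\sqrt{\ln^{[j]}(n)}$.

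The hard part will be identifying the correct refined parametrization and averaging inequality that realises this ratio cleanly, and adapting whatever technique established (\ref{dyhc67vzcxk}) from a single-layer analysis to the iterated setting required here; one likely must track both a ``coarse'' privileged border of $u$ and a further privileged border of $w(u)$, so that the induction at level $j$ feeds into a two-layer count at level $j+1$. By contrast, tracking the constants $\alpha_j$ and the threshold $n_j$ through the induction is routine, since $j$ is fixed in the theorem statement and the bound is only claimed for $n\ge n_j$.
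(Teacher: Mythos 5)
Your base case and the general inductive framing (iterating the self-similar structure of privileged words, so that the bound at level $j$ feeds a stronger bound at level $j+1$) do match the paper, which proves exactly such an inductive step in Theorem \ref{thuut7678yyhty}. But your decomposition is mislocated and the key technical ingredient is missing. You declare the long-border regime $m>n/2$ to be ``the crux'', yet when the two occurrences of the border overlap the whole word is determined by its first $\lceil n/2\rceil$ letters, so $\priv(n,m)\le q^{\lceil n/2\rceil}$ for each such $m$ and the total contribution is at most $\frac n2 q^{\lceil n/2\rceil}$, which is exponentially negligible; no dyadic decomposition or Cauchy--Schwarz is needed there, and your alternative plan of bounding it by $\sum_p\priv(n-p)$ is both unnecessary and, since that sum is of order $q^{n-1}$, far too lossy.

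Conversely, you dismiss the short-border regime $m\le n/2$ as ``negligible'' via the sum $\sum_m\priv(m)q^{n-2m}$. That sum is \emph{not} negligible: it is dominated by $m=1$ and is of order $q^{n-1}$, comparable to the trivial bound $q^n$ and far larger than the target $\rho^{[j+1]}(n)q^n$. The regime is controllable only because the privileged border $w$ appears \emph{exactly twice} in $u$, forcing the middle block of length $n-2m$ to \emph{avoid} $w$; one must therefore replace $q^{n-2m}$ by the avoidance count $\mu(n-2m,m)$ (Lemma \ref{ujd6562v3b6}), whose $(1-q^{-m})^{\lfloor\cdot\rfloor}$ factor is what makes the small-$m$ terms decay. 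The paper then splits $m\le n/2$ once more at $\overline h(n)\sim\frac{\ln n}{2\ln q}$: for $\overline h(n)\le m\le n/2$ a bare geometric sum suffices (Lemma \ref{dyu7e8vczg6t}), while for $m<\overline h(n)$ one combines the avoidance bound with the inductive bound $\priv(m)\le\alpha_j\rho^{[j]}(m)q^m$ evaluated at $m\approx\overline h(n)$. The mechanism that improves $\rho^{[j]}$ to $\rho^{[j+1]}$ is precisely the composition $\rho^{[j]}(\overline h(n))$ with $\overline h(n)\sim\ln n$, which pushes every iterated logarithm one level deeper (Lemmas \ref{f88hgye9fdj} and \ref{ud8d7vbvu}); it is not an averaging or Cauchy--Schwarz saving. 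Without the avoidance factor and this extra threshold, your argument cannot produce the claimed bound.
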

\begin{remark}
Note in Theorem \ref{jhd8kjenj5d4g} that the constants $\alpha_j$ and $n_j$ depend on the size of the alphabet $q$ and on the constant $j$. 
\end{remark}
\begin{remark}
It is easy to verify that $\lim_{n\rightarrow\infty}\frac{\rho^{[j]}(n)q^n}{\rho^{[j+1]}(n)q^n}=\infty$ for every positive integer $j$. It means that the bigger $j$ the better upper bound $\alpha_{j}\rho^{[j]}(n)q^n$.
\end{remark}
\begin{example}
We have that \begin{itemize}\item $\rho^{[1]}(n)q^n=n^{-\frac{1}{2}}q^n\ln{n}$,
\item $\rho^{[2]}(n)q^n=n^{-\frac{1}{2}}q^n\sqrt{\ln{n}}\ln{\ln{n}}$,
\item $\rho^{[3]}(n)q^n=n^{-\frac{1}{2}}q^n\sqrt{\ln{n}}(\ln{\ln{\ln{n}}})\sqrt{\ln{\ln{n}}}$, and 
\item $\rho^{[4]}(n)q^n=n^{-\frac{1}{2}}q^n\sqrt{\ln{n}}(\ln{\ln{\ln{\ln{n}}}})\sqrt{\ln{\ln{\ln{n}}}}\sqrt{\ln{\ln{n}}}$.
\end{itemize}
\end{example}

To prove our result, we apply in principle the same ideas like in \cite{RUKAVICKA2020105917}. It means that we enumerate the privileged words depending on the length of the maximal border. We distinguish ``short'' and ``long'' borders. It turns out that the number of privileged words with a short border is bigger than the number of privileged words with a long border. When comparing the proof in the current article and the proof in  \cite{RUKAVICKA2020105917}, the essential difference is that we consider only privileged words instead of all words when enumerating the borders. Due to this difference Theorem \ref{jhd8kjenj5d4g} does not hold for closed words; recall that the upper bound (\ref{dyhc67vzcxk}) holds for closed words. To facilitate the comprehension we use mostly the same notation like in \cite{RUKAVICKA2020105917}.

\section{Preliminaries}
Let $\Alphabet$ be an alphabet with $q$ letters, where $q>1$. Let $\epsilon$ denote the empty word. Let $\Alphabet^m$ denote the set of all words of length $m$, let $\Alphabet^+=\bigcup_{m\geq 1}\Alphabet^m$, and let $\Alphabet^*=\Alphabet^+\cup\{\epsilon\}$. We have that $\Alphabet^0=\{\epsilon\}$ and that $\vert \Alphabet^m\vert=q^m$. 

Let $\Alphabet_w(n)$ denote the number of words of length $n$ that do not contain the factor $w\in \Alphabet^*$. 
Let \[\mu(n,m)=\max\{\Alphabet_w(n)\mid w\in \Alphabet^m\}\mbox{.}\] The function $\mu(n,m)$ represents the maximal value of $\Alphabet_w(n)$ for all $w$ of length $m$.

Let $\widehat \priv(n)\subseteq\Alphabet^*$ denote the set of all privileged words of length $n$ and let $\widehat \priv(n,m)$ denote the set of all privileged words of length $n$ having the maximal border of length $m$. Let $\priv(n)=\vert \widehat \priv(n)\vert$ and $\priv(n,m)=\vert \widehat \priv(n,m)\vert$.

\begin{remark}
Note that:
\begin{itemize}
\item
$\widehat\priv(n)=\bigcup_{m=1}^{n-1}\widehat\priv(n,m)$.
\item $\widehat\priv(n,m)\cap\widehat\priv(n,\overline m)=\emptyset$, where $m\not=\overline m$. 
\end{itemize}
\end{remark} 

Let $\omega(n)=\frac{1}{\ln{q}}(\ln{n}-\ln{\ln{n}})\in\mathbb{R}$, where $n\in\mathbb{N}$.

Let $\Pi$ denote the set of all functions $\pi(n):\mathbb{N}\rightarrow \mathbb{N}$ such that $\pi(n)\in \Pi$ if and only if $1\leq\pi(n)\leq \max\{1,\omega(n)\}$ and $\pi(n)\leq \pi(n+1)$ for all $n\in \mathbb{N}$. We apply the function $\max$, because $\omega(n)<1$ for some small $n$.

\section{Previous results}
In this section we recall the results from  \cite{RUKAVICKA2020105917} that we will need for the current article.

In \cite{RUKAVICKA2020105917}, an upper bound for $\mu(n,m)$ was shown; it means an upper bound for the number of words of length $n$ that avoid some factor of length $m$.
\begin{lemma}(\cite[Lemma $2.1$]{RUKAVICKA2020105917})
\label{ujd6562v3b6}
If $n,m\in \mathbb{N}$ then
\[\mu(n,m)\leq q^n\left(1-\frac{1}{q^m}\right)^{\lfloor\frac{n}{m}\rfloor}\mbox{.}\]
\end{lemma}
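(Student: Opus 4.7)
The plan is to bound $\Alphabet_w(n)$ for an arbitrary fixed $w \in \Alphabet^m$ by a simple partitioning argument, and then take the maximum over $w$. The key observation is that if a word $u$ of length $n$ avoids $w$ as a factor, then in particular $w$ does not occur at any of the $\lfloor n/m\rfloor$ non-overlapping ``block positions'' $1, m+1, 2m+1, \ldots$. Since these blocks are disjoint, the constraints at different block positions are independent, which will give a clean product bound.

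First I would write $n = km + r$ with $k = \lfloor n/m \rfloor$ and $0 \le r < m$, and factor every $u \in \Alphabet^n$ uniquely as $u = v_1 v_2 \cdots v_k v_{k+1}$ with $|v_i| = m$ for $i \le k$ and $|v_{k+1}| = r$. If $u$ avoids $w$ as a factor, then each of $v_1,\ldots,v_k$ (being itself a factor of $u$ of length $m$) must differ from $w$. Hence there are at most $q^m - 1$ choices for each such block, and at most $q^r$ choices for the tail $v_{k+1}$. Multiplying:
\[
\Alphabet_w(n) \le (q^m - 1)^k \cdot q^r = q^{km}\left(1 - \frac{1}{q^m}\right)^k q^r = q^n\left(1 - \frac{1}{q^m}\right)^{\lfloor n/m \rfloor}.
\]
Taking the maximum over $w \in \Alphabet^m$ yields the desired bound on $\mu(n,m)$.

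There is essentially no hard step; the only thing to double-check is the degenerate case $m > n$, where $\lfloor n/m\rfloor = 0$ and the right-hand side is $q^n$, which is trivially an upper bound since $\mu(n,m) \le |\Alphabet^n| = q^n$ in general. The argument also does not use any structural properties of $w$, which is exactly why the same bound works uniformly for every length-$m$ forbidden word and thus for the maximum $\mu(n,m)$.
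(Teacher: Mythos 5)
Your proof is correct and uses the standard block-decomposition argument: write $n = km + r$, observe that a word avoiding $w$ must in particular have each of the $k$ disjoint length-$m$ blocks differ from $w$, and multiply. This is essentially the same approach as in the cited source (the bound $q^n(1-q^{-m})^{\lfloor n/m\rfloor}$ is precisely the count of words whose $k$ non-overlapping blocks avoid $w$, and the cited lemma is obtained this way, whether phrased directly as you do or via the equivalent recursion $\Alphabet_w(n)\le (q^m-1)\Alphabet_w(n-m)$). Your handling of the degenerate case $m>n$ is also fine.
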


Let $\beta=\frac{1}{\ln{q}}\in\mathbb{R}$. Using Lemma \ref{ujd6562v3b6} it was shown that the number of words of length $n$ avoiding some factor of length shorter than $\pi(n)\in \Pi$ grows with $n$ approximately as the number of all words of length $n-\beta \ln{n}$. 
\begin{theorem}(\cite[Theorem $2.3$]{RUKAVICKA2020105917})
\label{nk211d2qp6h}
If $\pi(n)\in \Pi$ then there is a constant $\cca\in \mathbb{R}$ such that for all $n\in \mathbb{N}$ we have that
\[\frac{\mu(n,\pi(n))}{ q^{n-\beta\ln{n}}}\leq \cca\mbox{.}\]
\end{theorem}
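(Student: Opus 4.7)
The natural strategy is to apply Lemma \ref{ujd6562v3b6} directly with $m=\pi(n)$, which gives
\[
\mu(n,\pi(n))\le q^n\left(1-\frac{1}{q^{\pi(n)}}\right)^{\lfloor n/\pi(n)\rfloor}.
\]
Since $q^{n-\beta\ln n}=q^n/n$, the theorem is equivalent to the statement that $n\left(1-q^{-\pi(n)}\right)^{\lfloor n/\pi(n)\rfloor}$ stays bounded above by a constant independent of $n$.

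I would then take logarithms and use the elementary estimate $\ln(1-x)\le -x$, reducing the goal to proving
\[
\frac{\lfloor n/\pi(n)\rfloor}{q^{\pi(n)}}\;\ge\;\ln n - O(1).
\]
The definition of $\Pi$ is what makes this tractable: the bound $\pi(n)\le\omega(n)=\frac{1}{\ln q}(\ln n-\ln\ln n)$ yields $q^{\pi(n)}\le q^{\omega(n)}=n/\ln n$, hence $q^{-\pi(n)}\ge \ln n/n$, and coupled with $\lfloor n/\pi(n)\rfloor\ge n/\pi(n)-1$ this gives a lower bound of shape $\ln n/\pi(n)-\ln n/n$.

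The hard part is extracting the full $\ln n$ on the right even when $\pi(n)$ is allowed to grow almost like $\omega(n)\sim\log_q n$, because a bare substitution of the two estimates above only produces a constant-order lower bound rather than the needed $\ln n$. I anticipate handling this by splitting into two regimes --- for instance $\pi(n)\le\omega(n)/2$ versus $\pi(n)>\omega(n)/2$ --- and employing the full Taylor expansion $\ln(1-x)=-x-x^2/2-x^3/3-\cdots$ to squeeze additional savings out of the base inequality in the large-$\pi$ regime, where $q^{-\pi(n)}$ is not negligibly small. In the small-$\pi$ regime a direct substitution already yields an estimate much stronger than what is needed. Once the desired inequality $\lfloor n/\pi(n)\rfloor/q^{\pi(n)}\ge\ln n-O(1)$ is established uniformly in $\pi\in\Pi$, exponentiating returns the bound $n\left(1-q^{-\pi(n)}\right)^{\lfloor n/\pi(n)\rfloor}\le c_1$ and the theorem follows with $c_1$ depending only on $q$ and $\pi$.
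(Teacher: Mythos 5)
There is a genuine gap, and it sits exactly where you flagged ``the hard part'': the reduced goal $\lfloor n/\pi(n)\rfloor\,q^{-\pi(n)}\ge\ln n-O(1)$ is simply false at the top of the admissible range, and neither regime splitting nor higher-order Taylor terms can repair it. Take $\pi(n)=\max\{1,\lfloor\omega(n)\rfloor\}\in\Pi$. Then $q^{-\pi(n)}\le q\,q^{-\omega(n)}=q\ln n/n$ and $\lfloor n/\pi(n)\rfloor\le n/\pi(n)=(1+o(1))\,n\ln q/\ln n$, so $\lfloor n/\pi(n)\rfloor\,q^{-\pi(n)}\le(1+o(1))\,q\ln q$, a constant --- not $\ln n-O(1)$. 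Consequently $(1-q^{-\pi(n)})^{\lfloor n/\pi(n)\rfloor}\ge e^{-(2+o(1))q\ln q}$ is bounded below by a positive constant, so the quantity $n(1-q^{-\pi(n)})^{\lfloor n/\pi(n)\rfloor}$ that you need to bound grows linearly in $n$: Lemma \ref{ujd6562v3b6} with $m=\pi(n)$ only gives $\mu(n,\pi(n))\lesssim q^{n-1}$ in this regime, far from $q^{n}/n$. The full expansion $\ln(1-x)=-x-x^2/2-\cdots$ cannot rescue this, because the extra terms contribute at most $\lfloor n/\pi\rfloor\sum_{k\ge2}q^{-k\pi}/k=O(\ln n/n)\to0$; there is no hidden $\ln n$ to be squeezed out. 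Roughly speaking, the blockwise bound of Lemma \ref{ujd6562v3b6} yields the conclusion only when $\pi(n)q^{\pi(n)}\lesssim n/\ln n$, whereas membership in $\Pi$ only guarantees $q^{\pi(n)}\le n/\ln n$.

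Two further remarks. First, the paper you are writing into does not prove this statement; it is quoted from \cite{RUKAVICKA2020105917}, so there is no in-paper proof to compare against. Second, with the definitions of $\omega$ and $\Pi$ as transcribed here the statement itself looks doubtful at the top of the range, independently of the method: for the word $w=a^{m}$ (a single repeated letter) with $q^{m}\approx n/\ln n$, standard longest-run estimates give $\Alphabet_w(n)$ of order $q^{n}\exp\bigl(-(q-1)q^{-m-1}n\bigr)\approx q^{n}n^{-(q-1)/q}$, which is not $O(q^{n-\beta\ln n})=O(q^{n}/n)$. So either the source works with a slightly more restrictive class than the $\Pi$ defined here, or the bound should be understood in the form in which it is actually used later, namely Corollary \ref{ff21a2w12ppmnw5} applied with border lengths of size about $\frac{1}{\kappa}\omega(n)$, $\kappa=2$; in that range your direct substitution gives $\lfloor n/\pi\rfloor q^{-\pi}\gtrsim\sqrt{n/\ln n}\gg\ln n$ and the argument closes with no difficulty. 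As written, however, your plan cannot be completed for all $\pi\in\Pi$.
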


Let $h(n)=\lfloor\beta\ln{n}\rfloor$. 
We present Theorem \ref{nk211d2qp6h} in a more useful form for our next proofs.
\begin{corollary}(\cite[Corollary $2.4$]{RUKAVICKA2020105917})
\label{ff21a2w12ppmnw5}
If $\overline \pi(n)\in \Pi$ then there is a constant $\ccb\in \mathbb{R}$ such that for all $n\in \mathbb{N}$ we have that
\[\frac{\mu(n-2\overline \pi(n),\overline \pi(n))}{q^{n-h(n)}}\leq \ccb\mbox{.}\]
\end{corollary}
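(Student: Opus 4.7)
My plan is to reduce the claim directly to Theorem \ref{nk211d2qp6h} by combining two elementary facts: the function $\mu(\cdot,m)$ is nondecreasing in its first argument, and $h(n)\leq \beta\ln n$. Since $\overline\pi\in\Pi$, Theorem \ref{nk211d2qp6h} applied with $\pi=\overline\pi$ produces a constant $\cca$ such that $\mu(n,\overline\pi(n))\leq \cca\, q^{n-\beta\ln n}$ for every $n\in\mathbb{N}$. If I can then pass from $\mu(n,\overline\pi(n))$ down to $\mu(n-2\overline\pi(n),\overline\pi(n))$ and from $q^{n-\beta\ln n}$ up to $q^{n-h(n)}$, the stated inequality drops out with $\ccb=\cca$.

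For the first passage I verify that $N\mapsto \mu(N,m)$ is nondecreasing for every fixed $m$, using the standing assumption $q\geq 2$. The argument is a one-letter prepending: given any $u\in \Alphabet^N$ avoiding $w\in \Alphabet^m$, the only new length-$m$ factor introduced by prepending a letter $a$ is the length-$m$ prefix of $au$, and this equals $w$ precisely when $a=w_1$ and the length-$(m-1)$ prefix of $u$ matches $w_2\cdots w_m$. Hence at most one value of $a$ is forbidden, so at least $q-1\geq 1$ prependings of $u$ still avoid $w$. This gives $\Alphabet_w(N+1)\geq (q-1)\Alphabet_w(N)\geq \Alphabet_w(N)$, and taking the maximum over $w\in\Alphabet^m$ yields the monotonicity $\mu(N+1,m)\geq \mu(N,m)$. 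Since $n-2\overline\pi(n)\leq n$, iterating gives $\mu(n-2\overline\pi(n),\overline\pi(n))\leq \mu(n,\overline\pi(n))$. The second passage is instant: $h(n)=\lfloor\beta\ln n\rfloor\leq\beta\ln n$ yields $q^{n-\beta\ln n}\leq q^{n-h(n)}$.

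Assembling these pieces gives
\[
\mu(n-2\overline\pi(n),\overline\pi(n))\leq \mu(n,\overline\pi(n))\leq \cca\, q^{n-\beta\ln n}\leq \cca\, q^{n-h(n)},
\]
so the corollary holds with $\ccb=\cca$. The only piece of genuine content is the monotonicity of $\mu$ in its first argument; it is a standard argument but relies essentially on $q\geq 2$ (it would collapse on a unary alphabet). I therefore expect no real obstacle, and anticipate that the author's proof consists of essentially the three lines above.
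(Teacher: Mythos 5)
Your argument is correct, but note for context that the paper does not prove this corollary itself: it is imported verbatim as \cite[Corollary $2.4$]{RUKAVICKA2020105917}, so there is no in-paper proof to compare against. Evaluated on its own merits, your reduction is sound: the prepending injection correctly shows $\Alphabet_w(N+1)\geq(q-1)\Alphabet_w(N)$ (only the new length-$m$ prefix can create a fresh occurrence of $w$, forbidding at most one prepended letter), hence $\mu(\cdot,m)$ is nondecreasing for $q\geq 2$, and the chain $\mu(n-2\overline\pi(n),\overline\pi(n))\leq\mu(n,\overline\pi(n))\leq\cca q^{n-\beta\ln n}\leq\cca q^{n-h(n)}$ closes with $\ccb=\cca$, the last step using $h(n)=\lfloor\beta\ln n\rfloor\leq\beta\ln n$. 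The cited source almost certainly derives the corollary instead by re-running the Lemma \ref{ujd6562v3b6} bound at the shifted argument, noting $\lfloor(n-2\overline\pi(n))/\overline\pi(n)\rfloor=\lfloor n/\overline\pi(n)\rfloor-2$ costs only the bounded factor $q^{-2\overline\pi(n)}(1-q^{-\overline\pi(n)})^{-2}\leq 1$; your monotonicity route is shorter and uses Theorem \ref{nk211d2qp6h} as a black box, which is arguably cleaner. One inherited caveat worth naming: for very small $n$ (e.g.\ $n=1$) one has $n-2\overline\pi(n)\leq 0$, where $\mu$ is not defined in the paper's setup, so the blanket ``for all $n\in\mathbb{N}$'' tacitly relies on a convention or on absorbing finitely many $n$ into the constant; your chain carries the same harmless gap and it is not specific to your approach.
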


Let $\kappa>1$ be a real constant and let \begin{equation}\label{nndjf99ej}\overline h(n)=\max\{1,\lfloor\frac{1}{\kappa}\omega(n)\rfloor\}\mbox{.}\end{equation} We apply the function $\max$ to assert that $\overline h(n)\geq 1$ for all $n$. 
\begin{remark}
In our proofs, the privileged words will be enumerated depending on the length of the maximal border; recall the ``short'' and ``long'' borders mentioned in the introduction. We will distinguish the length of the maximal border for $m<\overline h(n)$ and for $m\geq\overline h(n)$. 
\end{remark}

The next technical lemma shows an upper bound for $q^{-h(n)+\overline h(n)}$.
\begin{lemma}(\cite[Lemma $2.7$]{RUKAVICKA2020105917}) 
\label{pplk856t59e9e}
There is a constant $\ccc\in \mathbb{R}$ such that for all $n\in \mathbb{N}$ we have that 
\[q^{-h(n)+\overline h(n)}\leq \ccc q^{\frac{\ln{n}}{\ln{q}}\left(\frac{1}{\kappa}-1\right)}\mbox{.}\]
\end{lemma}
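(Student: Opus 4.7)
The plan is to take logarithms (base $q$) of the claimed inequality and control the exponent $-h(n) + \overline{h}(n)$ directly. The two elementary ingredients are the floor estimates
\[
h(n) = \lfloor \beta \ln n \rfloor \geq \beta \ln n - 1, \qquad \overline{h}(n) \leq \tfrac{1}{\kappa}\omega(n) = \tfrac{\beta}{\kappa}(\ln n - \ln\ln n),
\]
the second of which is valid whenever the floor $\lfloor \tfrac{1}{\kappa}\omega(n)\rfloor$, rather than the constant $1$, determines $\overline{h}(n)$.

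I would split into two cases. In the \emph{principal case}, where $\overline{h}(n) = \lfloor \tfrac{1}{\kappa}\omega(n)\rfloor$, combining the two estimates gives
\[
-h(n) + \overline{h}(n) \leq 1 + \beta \ln n\left(\tfrac{1}{\kappa}-1\right) - \tfrac{\beta}{\kappa}\ln\ln n \leq 1 + \frac{\ln n}{\ln q}\left(\tfrac{1}{\kappa}-1\right),
\]
where the last inequality uses $\beta = 1/\ln q$ and $\ln\ln n \geq 0$ (automatic for $n\geq 3$). Exponentiating with base $q$ yields $q^{-h(n)+\overline{h}(n)} \leq q \cdot q^{\frac{\ln n}{\ln q}(\frac{1}{\kappa}-1)}$, which is the claim with constant $q$.

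The \emph{boundary case} $\overline{h}(n) = 1$ arises only when $\tfrac{1}{\kappa}\omega(n) < 1$, which restricts $n$ to a bounded initial segment of $\mathbb{N}$, since $\omega(n)\to\infty$. On this finite set both sides of the desired inequality are bounded and positive, so one can enlarge $\ccc$ once to absorb all of these values. Taking the maximum of the two resulting constants gives the required $\ccc$.

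I do not foresee any real obstacle: the proof is a careful exponent chase, and the only subtlety is the bookkeeping around the $\max$ in the definition of $\overline{h}(n)$ and around the two floors in $h(n)$ and $\overline{h}(n)$, which must be absorbed into an $n$-independent additive slack in the exponent (here, the single $+1$ that becomes the factor $q$ after exponentiation).
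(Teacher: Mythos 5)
Your proof is correct, and it is the natural exponent-bookkeeping argument. Note, though, that the paper itself offers no proof of this lemma --- it is imported wholesale as Lemma~2.7 of the cited reference \cite{RUKAVICKA2020105917} --- so there is no in-paper argument to compare against; your reconstruction is almost certainly what the source does. One tiny remark on bookkeeping: your ``principal case'' inequality drops $-\frac{\beta}{\kappa}\ln\ln n$ under the assumption $\ln\ln n\geq 0$, which fails for $n=2$ even when the floor (rather than the constant $1$) determines $\overline h(n)$; but since that, together with your ``boundary case,'' is a finite set of $n$, the same absorb-into-the-constant device covers it, so no real gap.
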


\section{Upper bound for privileged words}

We show that if $w$ is the maximal border of a privileged word $u$ then $w$ is privileged.
\begin{lemma}
\label{u8duyfh9}
If $n,m\in\mathbb{N}$, $u\in\widehat\priv(n,m)$, $w\in\Theta(u)$, and $\vert w\vert=m$ then $w\in\widehat\priv(m)$.
\end{lemma}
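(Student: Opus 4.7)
The plan is to unpack the definition of ``$u$ is privileged'' and show that the privileged border guaranteed by the definition must coincide with the maximal border $w$.

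Since $u \in \widehat\priv(n,m)$ with $m \geq 1$ we have $|u| = n > m \geq 1$, so by the definition of privileged word there exists some $w' \in \Theta(u)$ such that $w'$ is privileged and $w'$ appears exactly twice in $u$. Writing $m' = |w'|$, the maximality of $w$ gives $m' \leq m$. I will argue that in fact $m' = m$; then $w'$ and $w$ are both prefixes of $u$ of length $m$, so $w' = w$, and thus $w$ inherits being privileged.

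The heart of the proof is ruling out $m' < m$. Suppose for contradiction that $m' < m$. I would exhibit three distinct occurrences of $w'$ in $u$, contradicting the ``exactly twice'' clause. The three positions are:
\begin{itemize}
\item position $1$ (since $w'$ is a prefix of $u$),
\item position $n - m' + 1$ (since $w'$ is a suffix of $u$),
\item position $m - m' + 1$ (since $w$ is a prefix of $u$, and within that prefix copy of $w$ the word $w'$ sits as a suffix: indeed, because $|w'| < |w|$ and both $w'$ and $w$ are suffixes of $u$, $w'$ is a suffix of $w$).
\end{itemize}
The inequalities $1 \leq m' < m < n$ ensure that each of these indices lies in the admissible range $[1, n - m' + 1]$ and that the three indices are pairwise distinct ($1 \neq m - m' + 1$ because $m' < m$; $1 \neq n - m' + 1$ because $m' < n$; and $m - m' + 1 \neq n - m' + 1$ because $m < n$).

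The only subtlety is checking that the occurrence at position $m - m' + 1$ is genuinely an occurrence of $w'$, which reduces to the observation that a border shorter than another border is itself a suffix of the longer one. Beyond that, everything is a matter of verifying the index arithmetic; there is no deep obstacle.
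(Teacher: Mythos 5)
Your proof is correct and takes essentially the same approach as the paper: both derive a contradiction by exhibiting three distinct occurrences of the shorter privileged border, contradicting the ``exactly twice'' clause. The only difference is cosmetic --- you use that $w'$ is a suffix of $w$ to find the extra occurrence at the end of the prefix copy of $w$ (position $m-m'+1$), while the paper uses that $\overline w$ is a prefix of $w$ to find it at the start of the suffix copy of $w$ (position $n-m+1$); these are mirror images of one another.
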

\begin{proof}
Suppose that $w$ is not privileged. Then since $u$ is privileged there is a privileged border $\overline w\in\Theta(u)\setminus\{w\}$ with exactly two occurrences in $u$. We have that $\vert \overline w\vert<\vert w\vert$, $\overline w$ is a prefix of $w$, and $\overline w$ is a suffix of $u$. Since $w$ has at least two occurrences in $u$, it follows that $\overline w$ has at least three occurrences in $w$. This is a contradiction. We conclude that $w$ is privileged word. This ends the proof.
\end{proof}

We show a recursive upper bound for the number of privileged words $\priv(n,m)$. The following lemma is a variation of \cite[Lemma $2.5$]{RUKAVICKA2020105917} for privileged words.
\begin{lemma} \label{dhdud77eghej} Suppose $n,m\in \mathbb{N}$. We have that
\begin{itemize}
  \item
  If $2m>n$ then $\priv(n,m)\leq q^{\lceil\frac{n}{2}\rceil}$.
  \item
  If $2m\leq n$ then $\priv(n,m)\leq \priv(m)\mu(n-2m,m)$. 
\end{itemize}
\end{lemma}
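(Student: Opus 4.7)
The plan is to split on the two cases of the hypothesis and give a direct counting argument for each. In both, I fix $u \in \widehat\priv(n,m)$ with maximal border $w$ of length $m$.

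For the case $2m > n$, my plan is a periodicity argument. Because $w$ is simultaneously a prefix and suffix of $u$, the word $u$ has period $n - m$, i.e.\ $u_i = u_{i+(n-m)}$ for every admissible $i$, and so $u$ is fully determined by its first $n - m$ letters. The inequality $2m > n$ forces $n - m < n/2$, and a quick parity check (distinguishing $n$ even and $n$ odd) yields $n - m \leq \lceil n/2 \rceil$. Thus $\priv(n,m) \leq q^{n-m} \leq q^{\lceil n/2 \rceil}$, which is the first bullet. This case is essentially routine.

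For the case $2m \leq n$, the strategy is to set up the injective encoding $u \mapsto (w, v)$, where $v \in \Alphabet^{n-2m}$ is the central block in the decomposition $u = w v w$, and show that the image lies inside the product $\widehat\priv(m) \times \{v \in \Alphabet^{n-2m} : w \text{ is not a factor of } v\}$. The first coordinate is handled by Lemma \ref{u8duyfh9}, which directly gives $w \in \widehat\priv(m)$. The second coordinate is the heart of the matter: I need to know that the \emph{maximal} border $w$ itself occurs exactly twice in $u$, since then any occurrence of $w$ inside $v$ would be a third occurrence, a contradiction. My plan is to show that the privileged border $\overline w$ guaranteed by privilege of $u$ (appearing exactly twice in $u$) must in fact coincide with $w$. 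Since $\overline w \in \Theta(u)$ and $w$ is maximal, $|\overline w| \leq m$; assume for contradiction $|\overline w| < m$. Then $\overline w$ is a border of $w$ (as a prefix of $u$ it is a prefix of $w$, and as a suffix of $u$ of length $<m$ it is a suffix of $w$), so $\overline w$ occurs at each of the four positions $1,\ m - |\overline w| + 1,\ n - m + 1,\ n - |\overline w| + 1$ of $u$; the hypothesis $2m \leq n$ together with $0 < |\overline w| < m < n$ makes these four positions pairwise distinct, contradicting that $\overline w$ occurs exactly twice. Hence $\overline w = w$, so $w$ itself occurs exactly twice in $u$, no occurrence of $w$ can lie inside $v$, and summing $\Alphabet_w(n - 2m) \leq \mu(n - 2m, m)$ over $w \in \widehat\priv(m)$ yields the recursion $\priv(n,m) \leq \priv(m)\mu(n-2m,m)$.

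The main obstacle is the four-position count that pins down $\overline w = w$: one has to verify carefully that $2m \leq n$ together with $|\overline w| < m$ rules out every collision among those four candidate positions. Once this is in place, the rest of case $2m \leq n$, and all of case $2m > n$, reduces to standard bookkeeping.
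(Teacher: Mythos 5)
Your proof is correct and follows essentially the same route as the paper: treat the overlapping case via the period $n-m$ (the paper phrases it as ``the first half determines the second half,'' which is the same observation), and in the non‑overlapping case encode $u=wvw$ by the pair $(w,v)$, using Lemma~\ref{u8duyfh9} for the first coordinate and a ``no internal occurrence of $w$'' constraint on the second. The one place you add genuine value is the step you flag as the heart of the matter: you explicitly argue that the privileged border $\overline w$ witnessing privilegedness of $u$ must equal the maximal border $w$, hence $w$ occurs exactly twice and cannot occur inside $v$. The paper simply writes that ``Lemma~\ref{u8duyfh9} implies $\widehat\priv(n,m)\subseteq T(n,m)$,'' but the stated conclusion of that lemma is only that $w$ is privileged, not that $w$ occurs exactly twice; the needed fact is really a byproduct of the \emph{proof} of Lemma~\ref{u8duyfh9} (the same three‑occurrence contradiction), so your explicit derivation fills a small gap the paper glosses over. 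One minor remark: you do not actually need all four positions $1,\ m-\vert\overline w\vert+1,\ n-m+1,\ n-\vert\overline w\vert+1$ to be distinct; the three positions $1$, $m-\vert\overline w\vert+1$, and $n-\vert\overline w\vert+1$ are already pairwise distinct from $1\le\vert\overline w\vert<m<n$ alone, without invoking $2m\le n$, so the occurrence count is even more robust than your write‑up suggests (the hypothesis $2m\le n$ is only needed so that the decomposition $u=wvw$ with $v\in\Alphabet^{n-2m}$ makes sense).
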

\begin{proof}
If $2m>n$ and $w\in \Alphabet^m$ then there is obviously at most one word $u$ with $\vert u\vert=n$ having a prefix and a suffix $w$; realize that the prefix $w$ and the suffix $w$ would overlap with each other. If such $u$ exists then the first half of $u$ uniquely determines the second half of $u$. If follows that \begin{equation}\label{djcdkk33jd}\priv(n,m)\leq q^{\lceil\frac{n}{2}\rceil}\mbox{.}\end{equation}

Let $\Factor(v)$ denote the set of all factors of $v\in \Alphabet^*$. If $n\geq 2m$ then let \[T(n,m)=\{wuw\mid u\in \Alphabet^{n-2m}\mbox{ and }w\in \widehat\priv(m)\mbox{ and }w\not\in \Factor(u)\}\mbox{.}\] If $n\geq 2m$ then Lemma \ref{u8duyfh9} implies that \begin{equation}\label{jdid8bbjd4r}\widehat \priv(n,m)\subseteq T(n,m)\mbox{.}\end{equation} It is easy to see that \begin{equation}\label{msd98dhhwiu}\vert T(n,m)\vert\leq\priv(m)\mu(n-2m,m)\mbox{.}\end{equation}
The lemma follows from (\ref{djcdkk33jd}), (\ref{jdid8bbjd4r}), and (\ref{msd98dhhwiu}). This ends the proof.
\end{proof}

\begin{definition}
\label{gbd7df99hhdg6} 
Let $\up$ be a set of functions $\rho:\mathbb{N}\rightarrow\mathbb{R}$ such that $\rho\in \up$ if and only if there is $n_0\in\mathbb{N}$ such that for all $n>n_0$ we have that \begin{enumerate}\item $q^{n}\rho(n)\geq \priv(n)\mbox{,}$ \item $\rho(n)\geq\rho(n+1)$, and 
\item \label{uyd7bvxgdu} $q^{n}\rho(n)\leq q^{n+1}\rho(n+1)$.\end{enumerate} Thus $\up(X)$ is a set of non-increasing functions $\rho$ such that $q^{n}\rho(n)$ are upper bounds for the number of privileged words.
\end{definition}

Using the functions of $\up$, we can restate Lemma \ref{dhdud77eghej} as follows. We omit the proof, as it follows immediately from Lemma \ref{dhdud77eghej} and Definition \ref{gbd7df99hhdg6}.
\begin{lemma} \label{dyduf78jehdf} If $\rho\in\up$ then there is $n_0\in\mathbb{N}$ such that for all $n,m\in \mathbb{N}$ with $n>n_0$ we have that
\begin{itemize}
  \item
  If $2m>n$ then $\priv(n,m)\leq q^{\lceil\frac{n}{2}\rceil}$.
  \item
  If $2m\leq n$ then $\priv(n,m)\leq q^{m}\rho(m)\mu(n-2m,m)$. 
\end{itemize}
\end{lemma}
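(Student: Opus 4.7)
The plan is to deduce Lemma \ref{dyduf78jehdf} as an immediate consequence of Lemma \ref{dhdud77eghej} combined with the bound $\priv(m)\leq q^{m}\rho(m)$ supplied by the definition of $\up$. First I would invoke Definition \ref{gbd7df99hhdg6} to obtain, for the given $\rho\in\up$, a threshold $n_{\rho}$ such that $\priv(k)\leq q^{k}\rho(k)$ for every $k>n_{\rho}$, and take the $n_0$ of the conclusion to be $n_{\rho}$ (or slightly larger if convenient).

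Second, I would split exactly as Lemma \ref{dhdud77eghej} does. In the case $2m>n$, the conclusion $\priv(n,m)\leq q^{\lceil n/2\rceil}$ is copied verbatim from Lemma \ref{dhdud77eghej}; nothing from $\up$ is needed here. In the case $2m\leq n$, Lemma \ref{dhdud77eghej} gives $\priv(n,m)\leq \priv(m)\mu(n-2m,m)$, and the defining inequality of $\up$ gives $\priv(m)\leq q^{m}\rho(m)$; multiplying these yields precisely $\priv(n,m)\leq q^{m}\rho(m)\mu(n-2m,m)$. The only subtlety worth flagging is that the bound $\priv(m)\leq q^{m}\rho(m)$ from Definition \ref{gbd7df99hhdg6} is only guaranteed for $m>n_{\rho}$; the finitely many small indices $m\leq n_{\rho}$ can be absorbed into the choice of $n_0$, or handled by observing that $\rho$ may be adjusted on a finite initial segment without altering its $\up$-membership or the asymptotic consequences for which this lemma is invoked downstream. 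Precisely because the substitution is mechanical, the author omits the proof, and I would do the same.
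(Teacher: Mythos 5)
Your derivation is exactly what the paper has in mind: the paper explicitly omits the proof, stating that the lemma "follows immediately from Lemma \ref{dhdud77eghej} and Definition \ref{gbd7df99hhdg6}," and your substitution of $\priv(m)\leq q^{m}\rho(m)$ into the second clause of Lemma \ref{dhdud77eghej} is that deduction. You are also right to flag the subtlety that Definition \ref{gbd7df99hhdg6} only guarantees $\priv(m)\leq q^{m}\rho(m)$ for $m$ beyond some $n_{\rho}$, whereas the lemma's second clause is asserted for all $m$ with $2m\leq n$, including small $m$; note however that your first suggested fix --- enlarging $n_0$ --- does not address this, since $n_0$ constrains $n$ and not $m$, so $m=1$ remains in scope for every large $n$. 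Your second suggestion is the correct repair: replace $\rho$ by $\hat\rho$ with $\hat\rho(m)=\max\{\rho(m),1\}$ on the finite initial segment $m\leq n_{\rho}$, which preserves membership in $\up$ (the defining properties are only required eventually) while forcing $\priv(m)\leq q^{m}\leq q^{m}\hat\rho(m)$ for all $m$, and which leaves every downstream quantity of the form $\rho(\overline h(n))$ unchanged once $n$ is large enough that $\overline h(n)>n_{\rho}$.
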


We show an upper bound for the number of privileged words of length $n$ having the maximal border shorter than $\overline h(n)$.
\begin{lemma}
\label{bv56shdi883j} If $\rho\in\up$ then there are constants $\ccd\in\mathbb{R}$ and $n_0\in\mathbb{N}$ such that for all $n>n_0$ we have that
\[\sum_{m=1}^{\overline h(n)-1}\priv(n,m)\leq \ccd\frac{\ln{n}}{\kappa}q^{n}n^{\frac{1}{\kappa}-1}{\rho(\overline h(n))}\mbox{.}\]
\end{lemma}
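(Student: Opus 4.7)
The plan is to bound each term $\priv(n,m)$ in the sum uniformly in $m$ and then count the number of terms. Since $\overline h(n) \leq \omega(n)/\kappa = o(n)$, for $n$ sufficiently large the inequality $2m < n$ holds for every $m$ in the range of summation, so the second case of Lemma \ref{dyduf78jehdf} applies and yields $\priv(n,m) \leq q^m \rho(m) \mu(n-2m, m)$. Property \ref{uyd7bvxgdu} of Definition \ref{gbd7df99hhdg6} asserts that $n \mapsto q^n\rho(n)$ is non-decreasing, so $q^m\rho(m) \leq q^{\overline h(n)}\rho(\overline h(n))$ whenever $m \leq \overline h(n)$.

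The main technical step is a uniform estimate of the form $\mu(n-2m, m) \leq C q^{n - h(n)}$ valid for every $m \in [1, \overline h(n)]$ with a single constant $C$. Corollary \ref{ff21a2w12ppmnw5} gives this bound only for the specific choice $\overline\pi = \overline h$, and applying it with the family $\pi_m(n) = \min\{m, \overline h(n)\}$ would in principle produce constants depending on $m$. I would therefore return to Lemma \ref{ujd6562v3b6} directly. Taking logarithms in
\[ \mu(n-2m, m) \leq q^{n-2m}(1 - q^{-m})^{\lfloor (n-2m)/m\rfloor} \]
and using $\ln(1 - x) \leq -x$, the desired uniform estimate reduces to verifying that
\[ 2m\ln q + (n-3m)/(mq^m) \geq \ln n - O(1) \]
for all $m \in [1, \overline h(n)]$ and all sufficiently large $n$. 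A short calculation shows that $(n-3m)/(mq^m)$ is decreasing in $m$ on this range (because $n \gg m$), so its minimum occurs at $m = \overline h(n)$. At that point the bound $q^{\overline h(n)} \leq (n/\ln n)^{1/\kappa}$, which follows from the definition of $\overline h$, forces the second term to be at least of order $(n/\ln n)^{1-1/\kappa}$, and this dominates $\ln n$ since $\kappa > 1$.

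Combining the three ingredients yields $\priv(n,m) \leq C q^{\overline h(n)}\rho(\overline h(n)) q^{n-h(n)}$ uniformly in $m$. Summing over the $\overline h(n) - 1$ values of $m$, bounding $\overline h(n) - 1 \leq \omega(n)/\kappa \leq \ln n /(\kappa \ln q)$, and then applying Lemma \ref{pplk856t59e9e} to absorb $q^{-h(n) + \overline h(n)}$ into $\ccc n^{1/\kappa - 1}$ produces the required inequality once all constants are collected into a single $\ccd$.

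I expect the main obstacle to be securing the uniform bound on $\mu(n-2m, m)$: Corollary \ref{ff21a2w12ppmnw5} does not \emph{a priori} furnish a constant independent of $m$ when applied term-by-term to the family of functions $\pi_m$, so one must revisit Lemma \ref{ujd6562v3b6} and carefully check that the resulting estimate is uniform in $m$ across the whole range $[1, \overline h(n)]$.
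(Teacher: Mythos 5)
Your proposal follows the same overall decomposition as the paper's proof: apply the second case of Lemma \ref{dyduf78jehdf} to get $\priv(n,m) \leq q^m\rho(m)\mu(n-2m,m)$, use the monotonicity of $q^m\rho(m)$ (Property \ref{uyd7bvxgdu}) to pull out $q^{\overline h(n)}\rho(\overline h(n))$, bound the number of summands by $\overline h(n) \leq \ln n/(\kappa\ln q)$, and finally invoke Lemma \ref{pplk856t59e9e} to convert $q^{-h(n)+\overline h(n)}$ into $\ccc\, n^{1/\kappa-1}$. Where you diverge is in the justification of the uniform estimate $\mu(n-2m,m) \leq C q^{n-h(n)}$ for all $m \in [1,\overline h(n)]$. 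The paper simply cites Corollary \ref{ff21a2w12ppmnw5} and asserts the bound for the whole range of $m$; as you correctly observe, the corollary as stated produces, for a single $\overline\pi \in \Pi$, a constant bounding $\mu(n-2\overline\pi(n),\overline\pi(n))$ at the single argument $\overline\pi(n)$, and applying it to the family $\pi_m$ could in principle yield $m$-dependent constants. Your fallback — working directly from Lemma \ref{ujd6562v3b6}, taking logarithms, using $\ln(1-x)\leq -x$, noting that $(n-3m)/(mq^m)$ is decreasing in $m$ so its minimum over $[1,\overline h(n)]$ is attained at $m=\overline h(n)$, and then using $q^{\overline h(n)} \leq (n/\ln n)^{1/\kappa}$ to see that this minimum is of order $n^{1-1/\kappa}(\ln n)^{1/\kappa-1} \gg \ln n$ when $\kappa>1$ — is correct and fills the gap explicitly. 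So your argument is a more careful spelling-out of the same route; the paper's citation of Corollary \ref{ff21a2w12ppmnw5} implicitly relies on exactly this uniform estimate, which you verify from first principles. One tiny point worth recording in a final write-up: you should note that $\lfloor(n-2m)/m\rfloor \geq (n-3m)/m$ and that $h(n)\ln q \leq \ln n$, so the target inequality $2m\ln q + q^{-m}\lfloor(n-2m)/m\rfloor \geq h(n)\ln q - \ln C$ is indeed implied by the $\ln n - O(1)$ bound you state.
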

\begin{proof} 
From (\ref{nndjf99ej}) it follows 
that $\overline h(n)< \frac{n}{2}$ for sufficiently large $n$. Hence Lemma \ref{dhdud77eghej} implies that 
\begin{equation}\label{duf8fiedhjf}\sum_{m=1}^{\overline h(n)-1}\priv(n,m)\leq\sum_{m=1}^{\overline h(n)-1}\priv(m)\mu(n-2m,m)\mbox{.}\end{equation}
From (\ref{nndjf99ej}) it follows that \begin{equation}\label{dhnjv8f78dd}\overline h(n)\leq \frac{\ln{n}}{\kappa\ln{q}}\mbox{ for sufficiently large $n$.}\end{equation}
Corollary \ref{ff21a2w12ppmnw5} implies that $\mu(n-2m,m)\leq \ccb q^{n-h(n)}$, where $1\leq m\leq\overline h(n)\leq h(n)$ and $\ccb\in \mathbb{R}$ is some constant.  Then 
it follows from Lemma \ref{pplk856t59e9e}, Property \ref{uyd7bvxgdu} of Definition \ref{gbd7df99hhdg6}, and (\ref{dhnjv8f78dd}) that 
\begin{equation}\label{kifu66969t6}
\begin{split}\sum_{m=1}^{\overline h(n)-1}\priv(m)\mu(n-2m,m) &\leq \sum_{m=1}^{\overline h(n)}q^{m}\rho(m) \ccb q^{n-h(n)}\\ &\leq \overline h(n)q^{\overline h(n)}{\rho(\overline h(n))}\ccb q^{n-h(n)}
\\&\leq \ccb\ccc\overline h(n)q^{n+\frac{\ln{n}}{\ln{q}}(\frac{1}{\kappa}-1)}{\rho(\overline h(n))}\\&=\ccb\ccc\frac{\ln{n}}{\kappa\ln{q}}q^{n+\frac{\ln{n}}{\ln{q}}(\frac{1}{\kappa}-1)}{\rho(\overline h(n))}\\&= \ccb\ccc\frac{\ln{n}}{\kappa\ln{q}}q^{n}n^{\frac{1}{\kappa}-1}{\rho(\overline h(n))}\mbox{.}
\end{split}
\end{equation}
Let $\ccd=\frac{\ccb\ccc}{\ln{q}}$. The lemma follows from (\ref{duf8fiedhjf}) and (\ref{kifu66969t6}). This ends the proof.
\end{proof}

We show an upper bound for the number of privileged words of length $n$ with the maximal border longer than $\overline h(n)$ and shorter than $\frac{n}{2}$.
\begin{lemma}
\label{dyu7e8vczg6t}There are constants $\cce\in\mathbb{R}$ and $n_0\in\mathbb{N}$ such that  
\[\sum_{m=\overline h(n)}^{\lceil\frac{n}{2}\rceil}\priv(n,m) \leq \cce q^{n}(\ln{n})^{\frac{1}{\kappa}}n^{-\frac{1}{\kappa}}\mbox{, where }n>n_0\mbox{.}\]
\end{lemma}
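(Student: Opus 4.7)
The plan is to bound each summand using the trivial estimates $\priv(m) \leq q^m$ and $\mu(n-2m,m) \leq q^{n-2m}$, substitute into Lemma \ref{dhdud77eghej}, and sum the resulting geometric series in $m$. At this length scale the simple bound on $\priv(m)$ is already sharp enough: none of the finer estimates from the $\up$ framework are required.

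First I would separate the summation into the range $\overline h(n) \leq m \leq \lfloor n/2 \rfloor$, where the second case of Lemma \ref{dhdud77eghej} applies, and the at most one extra value $m = \lceil n/2 \rceil$ arising when $n$ is odd, where the first case gives $\priv(n,m) \leq q^{\lceil n/2 \rceil}$. The latter is easily $o\!\left(q^n n^{-1/\kappa}(\ln n)^{1/\kappa}\right)$, so it may be absorbed into $\cce$ for $n$ large. For the main range, Lemma \ref{dhdud77eghej} combined with the trivial bounds yields $\priv(n,m) \leq q^m\cdot q^{n-2m} = q^{n-m}$, whence
\[\sum_{m=\overline h(n)}^{\lfloor n/2 \rfloor}\priv(n,m) \leq q^n\sum_{m=\overline h(n)}^{\infty}q^{-m} = \frac{q\cdot q^{n-\overline h(n)}}{q-1}\mbox{.}\]

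The remaining task is to show $q^{-\overline h(n)} \leq q\cdot(\ln n)^{1/\kappa} n^{-1/\kappa}$. For $n$ large enough that $\omega(n)/\kappa \geq 1$, the definition (\ref{nndjf99ej}) of $\overline h(n)$ gives $\overline h(n) \geq \omega(n)/\kappa - 1$, and the identity $q^{\omega(n)} = e^{\ln n - \ln\ln n} = n/\ln n$ then yields $q^{-\overline h(n)} \leq q\cdot q^{-\omega(n)/\kappa} = q\cdot(\ln n/n)^{1/\kappa}$, exactly as required. Putting everything together produces the stated bound with $\cce = q^2/(q-1)$, enlarged slightly to swallow the negligible $m = \lceil n/2 \rceil$ contribution.

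There is no substantive obstacle here; the argument is genuinely elementary. The only things to watch are verifying that $\overline h(n) < n/2$ eventually so that the geometric sum has at least one term, and handling the $\max\{1,\cdot\}$ in the definition of $\overline h(n)$ for small $n$ by restricting to $n > n_0$ for a suitable threshold. The fact that the sum is dominated by its first term, $m = \overline h(n)$, mirrors the intuition that the ``short'' long borders contribute the bulk of this range; the remaining terms decay geometrically and contribute only a constant factor.
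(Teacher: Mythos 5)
Your proposal is correct and follows essentially the same route as the paper: bound $\priv(m)\le q^m$ and $\mu(n-2m,m)\le q^{n-2m}$, sum the geometric series, and convert $q^{-\overline h(n)}$ into $(\ln n/n)^{1/\kappa}$ via the definition of $\overline h(n)$. In fact you are a bit more careful than the printed proof: the paper applies the case $2m\le n$ of Lemma~\ref{dhdud77eghej} (and hence the expression $\mu(n-2m,m)$) over the whole range up to $m=\lceil n/2\rceil$, which for odd $n$ includes a value with $2m>n$, whereas you explicitly peel off that single endpoint, bound it by $q^{\lceil n/2\rceil}=o(q^n n^{-1/\kappa}(\ln n)^{1/\kappa})$, and absorb it into $\cce$. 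The resulting constants agree ($q/(1-q^{-1})=q^2/(q-1)$).
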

\begin{proof}
From Lemma \ref{dhdud77eghej} and from $\mu(n-2m,m)\leq q^{n-2m}$ we have for sufficiently large $n$ that  
\begin{equation}\label{bndhd65shdw}\sum_{m=\overline h(n)}^{\lceil\frac{n}{2}\rceil}\priv(n,m)\leq\sum_{m=\overline h(n)}^{\lceil\frac{n}{2}\rceil}\priv(m)\mu(n-2m,m)\leq \sum_{m=\overline h(n)}^{\lceil\frac{n}{2}\rceil}\priv(m)q^{n-2m}\mbox{.}\end{equation}
From (\ref{nndjf99ej}) we have for sufficiently large $n$ that \begin{equation}\label{rrjtuk66585g65}\begin{split}q^{-\overline h(n)} &\leq q^{-\frac{1}{\kappa \ln{q}}(\ln{n}-\ln{\ln{n}})+1} \\ &= q(\ln{n})^{\frac{1}{\kappa}}q^{-\frac{1}{\kappa \ln{q}}\ln{n}} \\ &= q(\ln{n})^{\frac{1}{\kappa}}n^{-\frac{1}{\kappa}}\mbox{.}\end{split}\end{equation} From (\ref{rrjtuk66585g65}) it follows that 
\begin{equation}
\label{rrth2111h25y}
\begin{split}
\sum_{m=\overline h(n)}^{\lceil\frac{n}{2}\rceil}\priv(m)q^{n-2m}&\leq\sum_{m=\overline h(n)}^{\lceil\frac{n}{2}\rceil}q^mq^{n-2m} \\&\leq q^n\sum_{m=\overline h(n)}^{\lceil\frac{n}{2}\rceil}q^{-m}\\&=q^n\frac{1-q^{-(\lceil\frac{n}{2}\rceil+1)}}{1-q^{-1}}-q^n\frac{1-q^{-\overline h(n)}}{1-q^{-1}}\\ &\leq \frac{q^{n-\overline h(n)}}{1-q^{-1}}\\&\leq \frac{q^{n+1}(\ln{n})^{\frac{1}{\kappa}}n^{-\frac{1}{\kappa}}}{1-q^{-1}}\mbox{.}
\end{split}
\end{equation}

Let $\cce=\frac{q}{1-q^{-1}}$. Then the lemma follows from (\ref{bndhd65shdw}) and (\ref{rrth2111h25y}). This ends the proof.
\end{proof}

We show an approximation for the function $\frac{\sigma^{[j]}(\overline h(n))}{\sigma^{[j+1]}(n)}$ as $n$ tends to infinity.
\begin{lemma}
\label{f88hgye9fdj}
If $j\in\mathbb{N}$ then \[\begin{split}\lim_{n\rightarrow\infty}\frac{\sigma^{[j]}(\overline h(n))\sqrt{\ln{(\overline h(n))}}}{\sigma^{[j+1]}(n)}=1\mbox{.}\end{split}\]
\end{lemma}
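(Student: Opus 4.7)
The plan is to reduce the lemma to a single asymptotic fact about iterated logarithms, namely that
\[\lim_{n\to\infty}\frac{\ln^{[i]}(\overline h(n))}{\ln^{[i+1]}(n)}=1\mbox{ for every integer }i\geq 1\mbox{,}\]
and then substitute this into the definition of $\sigma^{[j]}$.

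First I would observe that from (\ref{nndjf99ej}) and the definition of $\omega(n)$, for all sufficiently large $n$ one has $\overline h(n)=\frac{1}{\kappa\ln q}(\ln n-\ln\ln n)+O(1)$, so in particular $\overline h(n)\sim \frac{\ln n}{\kappa\ln q}$. Taking $\ln$ gives $\ln\overline h(n)=\ln\ln n-\ln(\kappa\ln q)+o(1)\sim\ln^{[2]}(n)$, which is the case $i=1$. For the inductive step I would use the elementary fact that if $a_n,b_n\to\infty$ with $a_n\sim b_n$, then $\ln a_n\sim\ln b_n$, since $\ln a_n-\ln b_n=\ln(a_n/b_n)\to 0$ while $\ln b_n\to\infty$. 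Applied to $a_n=\ln^{[i]}(\overline h(n))$ and $b_n=\ln^{[i+1]}(n)$, this gives the claim for $i+1$.

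With the asymptotic in hand, I would finish by a short case analysis that mirrors the definition of $\sigma^{[j]}$. For $j=1$, the numerator is $\sqrt{\ln\overline h(n)}\cdot\sqrt{\ln\overline h(n)}=\ln\overline h(n)\sim\ln^{[2]}(n)=\sigma^{[2]}(n)$. For $j=2$, the numerator is $\ln^{[2]}(\overline h(n))\sqrt{\ln\overline h(n)}\sim \ln^{[3]}(n)\sqrt{\ln^{[2]}(n)}=\sigma^{[3]}(n)$. For $j\geq 3$, the numerator expands to
\[\ln^{[j]}(\overline h(n))\cdot\sqrt{\ln\overline h(n)}\cdot\prod_{i=2}^{j-1}\sqrt{\ln^{[i]}(\overline h(n))}\mbox{,}\]
which by the asymptotic is equivalent to
\[\ln^{[j+1]}(n)\cdot\sqrt{\ln^{[2]}(n)}\cdot\prod_{i=2}^{j-1}\sqrt{\ln^{[i+1]}(n)}=\ln^{[j+1]}(n)\prod_{k=2}^{j}\sqrt{\ln^{[k]}(n)}=\sigma^{[j+1]}(n)\mbox{,}\]
after the substitution $k=i+1$ in the product and combining the stray $\sqrt{\ln^{[2]}(n)}$ factor into the resulting product starting at $k=2$.

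The main (and only) obstacle is bookkeeping: ensuring the index shift $i\mapsto i+1$ in the product correctly absorbs the extra $\sqrt{\ln\overline h(n)}$ factor, and handling the two small-$j$ cases which fall outside the general formula for $\sigma^{[j]}$. The underlying analysis is routine once the iterated-logarithm asymptotic is established.
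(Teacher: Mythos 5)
Your proposal is correct and follows essentially the same route as the paper: first establish $\ln^{[i]}(\overline h(n))\sim\ln^{[i+1]}(n)$, then do the bookkeeping on the product defining $\sigma^{[j]}$, splitting into the cases $j=1$, $j=2$, and $j\geq 3$ because the formula for $\sigma^{[j]}$ changes. The only real difference is that you supply a clean induction (via the fact that $a_n\sim b_n\to\infty$ implies $\ln a_n\sim\ln b_n$) for the key asymptotic, whereas the paper simply asserts it is ``easy to see''; your version is a bit more self-contained but buys nothing structurally new.
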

\begin{proof}

It follows from (\ref{nndjf99ej}) that for sufficiently large $n$ we have that $\overline h(n)=\lfloor\frac{1}{\kappa\ln{q}}(\ln{n}-\ln{\ln{n}})\rfloor$.  Then it is easy to see that 
\begin{equation}\label{mn87djdf99sj}\begin{split}\lim_{n\rightarrow\infty}\frac{\ln^{[j]}{(\overline h(n))}}{\ln^{[j+1]}{(n)}}=\lim_{n\rightarrow\infty}\frac{\ln^{[j]}{(\lfloor\frac{1}{\kappa\ln{q}}(\ln{n}-\ln{\ln{n}})\rfloor)}}{\ln^{[j+1]}{(n)}}=1\mbox{.}\end{split}\end{equation}
Let \[y(n)=\frac{\sigma^{[j]}(\overline h(n))\sqrt{\ln{(\overline h(n))}}}{\sigma^{[j+1]}(n)}\mbox{}\]
and let 
\[\overline y(n)=\frac{\sqrt{\ln{(\overline h(n))}}\prod_{i=2}^{j-1}\sqrt{\ln^{[i]}(\overline h(n))}}{\prod_{i=2}^{j}\sqrt{\ln^{[i]}(n)}}\mbox{.}\]

From (\ref{mn87djdf99sj}) it follows that 
\begin{equation}\label{b789sj22jdh}\begin{split}\lim_{n\rightarrow\infty}\overline y(n)&=\lim_{n\rightarrow\infty}\frac{\sqrt{\ln{(\overline h(n))}}\prod_{i=2}^{j-1}\sqrt{\ln^{[i]}(\overline h(n))}}{\prod_{i=2}^{j}\sqrt{\ln^{[i]}(n)}}\\&=\lim_{n\rightarrow\infty}\frac{\sqrt{\ln{(\overline h(n))}}\sqrt{\ln^{[2]}(\overline h(n))}\sqrt{\ln^{[3]}(\overline h(n))}\cdots\sqrt{\ln^{[j-1]}(\overline h(n))}}{\sqrt{\ln^{[2]}(n)}\sqrt{\ln^{[3]}(n)}\cdots \sqrt{\ln^{[j-1]}(n)}\sqrt{\ln^{[j]}(n)}}\\&=1\mbox{.}\end{split}\end{equation}

From (\ref{mn87djdf99sj}) and (\ref{b789sj22jdh}) it follows that 
\begin{equation}\label{hgst78f7erjhc}\begin{split}\lim_{n\rightarrow\infty}y(n)&=\lim_{n\rightarrow\infty}\frac{\ln^{[j]}{(\overline h(n))}\left(\prod_{i=2}^{j-1}\sqrt{\ln^{[i]}(\overline h(n))}\right)\sqrt{\ln{(\overline h(n))}}}{\ln^{[j+1]}{(n)}\prod_{i=2}^{j}\sqrt{\ln^{[i]}(n)}}=1\mbox{.}\end{split}\end{equation}

From (\ref{mn87djdf99sj}) it follows that 
\begin{equation}\label{b6cgsdfd8jj}\lim_{n\rightarrow\infty}\frac{\sigma^{[1]}(\overline h(n))\sqrt{\ln{(\overline h(n))}}}{\sigma^{[2]}(n)}=\lim_{n\rightarrow\infty}\frac{\sqrt{\ln{(\overline h(n))}}\sqrt{\ln{(\overline h(n))}}}{\ln^{[2]}{(n)}}=1\mbox{}\end{equation}
and
\begin{equation}\label{bii89dbvhdh}\lim_{n\rightarrow\infty}\frac{\sigma^{[2]}(\overline h(n))\sqrt{\ln{(\overline h(n))}}}{\sigma^{[3]}(n)}=\lim_{n\rightarrow\infty}\frac{\ln^{[2]}{(\overline h(n))}\sqrt{\ln{(\overline h(n))}}}{\ln^{[3]}{(n)}\sqrt{\ln^{[2]}(n)}}=1\mbox{.}\end{equation}
The lemma follows from (\ref{hgst78f7erjhc}), (\ref{b6cgsdfd8jj}), and (\ref{bii89dbvhdh}).
This ends the proof.
\end{proof}
The next technical lemma will be used in the proof of Theorem \ref{thuut7678yyhty}.
\begin{lemma}
\label{ud8d7vbvu}
If $j\in\mathbb{N}$ then there are constants $\ccf$ and $n_0$ such that for all $n\in\mathbb{N}$ with $n>n_0$ we have that \[\ccf\sigma^{[j+1]}{(n)}\sqrt{\ln{n}}q^{n}n^{-\frac{1}{\ktwo}}\geq \max\left\{\ln{(n)}q^{n}n^{-\frac{1}{\ktwo}}\rho^{[j]}(\overline h(n)), \sqrt{\ln{n}}q^{n}n^{-\frac{1}{\ktwo}}\right\}\mbox{.}\]
\end{lemma}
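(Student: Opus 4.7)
The plan is to factor out the common quantity $\sqrt{\ln n}\,q^{n}n^{-1/\ktwo}$ from both sides, which reduces the claim to producing a constant $\ccf$ such that
\[\ccf\,\sigma^{[j+1]}(n)\ \geq\ \max\!\left\{\sqrt{\ln n}\,\rho^{[j]}(\overline h(n)),\ 1\right\}\]
holds for all sufficiently large $n$. After this normalization the statement is purely an asymptotic identity about iterated logarithms; no word-combinatorics enters.

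First I would handle the nontrivial term $\sqrt{\ln n}\,\rho^{[j]}(\overline h(n))$. Unfolding the definition $\rho^{[j]}(m)=\sigma^{[j]}(m)\sqrt{\ln m}/\sqrt{m}$, I would rewrite
\[\sqrt{\ln n}\,\rho^{[j]}(\overline h(n))\ =\ \frac{\sqrt{\ln n}}{\sqrt{\overline h(n)}}\cdot\bigl(\sigma^{[j]}(\overline h(n))\,\sqrt{\ln \overline h(n)}\bigr).\]
The parenthesised factor is precisely the quantity estimated in Lemma~\ref{f88hgye9fdj}, which yields $\sigma^{[j]}(\overline h(n))\sqrt{\ln \overline h(n)}\leq 2\sigma^{[j+1]}(n)$ for $n$ large. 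For the remaining factor I would invoke the defining formula~(\ref{nndjf99ej}) of $\overline h(n)$: for large $n$ one has $\overline h(n)=\lfloor\omega(n)/\kappa\rfloor$, hence $\overline h(n)\cdot\kappa\ln q/\ln n\to 1$, so $\sqrt{\ln n}/\sqrt{\overline h(n)}$ is bounded above by a constant $K$ (morally $\sqrt{\kappa\ln q}$). Multiplying these bounds yields $\sqrt{\ln n}\,\rho^{[j]}(\overline h(n))\leq 2K\,\sigma^{[j+1]}(n)$ for all sufficiently large $n$.

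The second term of the maximum is disposed of by observing that $\sigma^{[j+1]}(n)\to\infty$ as $n\to\infty$ (its leading factor is $\ln^{[j+1]}(n)$), so $\sigma^{[j+1]}(n)\geq 1$ eventually, which handles the constant $1$ on the right-hand side with any $\ccf\geq 1$. Taking $\ccf=\max\{2K,1\}$ and choosing $n_{0}$ large enough that both the conclusion of Lemma~\ref{f88hgye9fdj} and the estimate $\overline h(n)\sim \ln n/(\kappa\ln q)$ have already taken effect then completes the argument. I do not expect a genuine obstacle: the lemma is a routine asymptotic repackaging of Lemma~\ref{f88hgye9fdj} together with the size of $\overline h(n)$, designed to recast the combined bound of Lemmas~\ref{bv56shdi883j} and~\ref{dyu7e8vczg6t} in the precise form demanded by the forthcoming Theorem~\ref{thuut7678yyhty}.
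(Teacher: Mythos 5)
Your proposal is correct and follows essentially the same route as the paper: both factor out the common $\sqrt{\ln n}\,q^n n^{-1/\ktwo}$, split $\sqrt{\ln n}\,\rho^{[j]}(\overline h(n))$ into $\bigl(\sqrt{\ln n}/\sqrt{\overline h(n)}\bigr)\cdot\sigma^{[j]}(\overline h(n))\sqrt{\ln\overline h(n)}$, invoke Lemma~\ref{f88hgye9fdj} for the second factor, and use $\overline h(n)\sim \ln n/(\kappa\ln q)$ to bound the first, with the constant $1$ in the maximum handled by $\sigma^{[j+1]}(n)\to\infty$. The paper phrases this as a single limit computation $\lim_n y(n)=\sqrt{\kappa\ln q}$, but the content is identical.
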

\begin{proof}
Let \[\begin{split}y(n) &= \frac{\ln{(n)}q^{n}n^{-\frac{1}{\ktwo}}\rho^{[i]}(\overline h(n))}{\sigma^{[i+1]}(n)\sqrt{\ln{n}}q^{n}n^{-\frac{1}{\ktwo}}}\\ &=\frac{\sqrt{\ln{n}}\rho^{[i]}(\overline h(n))}{\sigma^{[i+1]}(n)}\\&=\frac{\sqrt{\ln{n}}\sigma^{[i]}(\overline h(n))\sqrt{\ln{(\overline h(n))}}}{\sigma^{[i+1]}(n)\sqrt{\overline h(n)}}\mbox{.}\end{split}\]
Lemma \ref{f88hgye9fdj} and (\ref{nndjf99ej}) imply that
\begin{equation}
\label{vvbdhd87f}
\begin{split}
\lim_{n\rightarrow\infty}y(n) &= 
\lim_{n\rightarrow\infty}\frac{\sqrt{\ln{n}}}{\sqrt{\overline h(n)}}\\&=
\lim_{n\rightarrow\infty}\frac{\sqrt{\ln{n}}}{\sqrt{\lfloor\frac{1}{\kappa\ln{q}}(\ln{n}-\ln{\ln{n}})\rfloor}}\\&=
\sqrt{\kappa\ln{q}}
\mbox{.}
\end{split}
\end{equation}
Obviously $\sigma^{[j+1]}{(n)}\sqrt{\ln{n}}q^{n}n^{-\frac{1}{\ktwo}}\geq \sqrt{\ln{n}}q^{n}n^{-\frac{1}{\ktwo}}$ for sufficiently large $n$. Thus 
 the lemma follows from (\ref{vvbdhd87f}).
This ends the proof.
\end{proof}

We show how to improve the upper bound for the number of privileged words on condition that there is already an upper bound of the form $\alpha_j\rho^{[j]}(n)q^n$, where $\alpha_j$ is a constant.
\begin{theorem}
\label{thuut7678yyhty} 
If there is $j\in\mathbb{N}$ and a constant $\alpha_j$ such that $\alpha_j\rho^{[j]}\in\up$ then there is a constant $\alpha_{j+1}$ such that 
$\alpha_{j+1}\rho^{[j+1]}\in\up$.
\end{theorem}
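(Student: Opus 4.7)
The plan is to bound $\priv(n)$ by splitting the sum $\sum_{m=1}^{n-1}\priv(n,m)$ into three ranges of $m$---short borders $1\leq m<\overline h(n)$, middle borders $\overline h(n)\leq m\leq\lceil n/2\rceil$, and long borders $\lceil n/2\rceil<m<n$---and then combining the resulting bounds via Lemma \ref{ud8d7vbvu}. Since Lemma \ref{ud8d7vbvu} is written with $\ktwo=2$, I would fix $\kappa=2$ in (\ref{nndjf99ej}). The hypothesis that $\rho:=\alpha_j\rho^{[j]}$ lies in $\up$ is exactly the ingredient required to invoke Lemma \ref{bv56shdi883j}.

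The three range-by-range estimates are
\[\begin{split}\sum_{m=1}^{\overline h(n)-1}\priv(n,m)&\leq \tfrac{\ccd\alpha_j}{2}\ln(n)\,q^{n}n^{-1/2}\rho^{[j]}(\overline h(n))\quad\text{(Lemma \ref{bv56shdi883j}),}\\ \sum_{m=\overline h(n)}^{\lceil n/2\rceil}\priv(n,m)&\leq \cce\,q^{n}\sqrt{\ln n}\,n^{-1/2}\quad\text{(Lemma \ref{dyu7e8vczg6t}),}\\ \sum_{m=\lceil n/2\rceil+1}^{n-1}\priv(n,m)&\leq \tfrac{n}{2}\,q^{\lceil n/2\rceil}\quad\text{(first bullet of Lemma \ref{dhdud77eghej}).}\end{split}\]
Using the identity $\sigma^{[j+1]}(n)\sqrt{\ln n}/\sqrt{n}=\rho^{[j+1]}(n)$, Lemma \ref{ud8d7vbvu} bounds the expression $\ln(n)\,q^n n^{-1/2}\rho^{[j]}(\overline h(n))$ in the first right-hand side by a constant times $q^n\rho^{[j+1]}(n)$, and likewise bounds the expression $\sqrt{\ln n}\,q^n n^{-1/2}$ in the second. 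The third right-hand side decays exponentially in $n$ compared with $q^n\rho^{[j+1]}(n)$, so it is $o(q^n\rho^{[j+1]}(n))$. Summing the three contributions produces a constant $\alpha_{j+1}$ with $\priv(n)\leq\alpha_{j+1}\rho^{[j+1]}(n)q^n$ for all sufficiently large $n$.

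To conclude that $\alpha_{j+1}\rho^{[j+1]}\in\up$, I would then verify the other two clauses of Definition \ref{gbd7df99hhdg6}. Both are routine: since $\sigma^{[j+1]}(n)$ is an iterated-logarithm product (hence grows much more slowly than $\sqrt{n}/\sqrt{\ln n}$ decays), $\rho^{[j+1]}$ is eventually non-increasing; and since $\rho^{[j+1]}(n+1)/\rho^{[j+1]}(n)\to 1>1/q$, the function $q^n\rho^{[j+1]}(n)$ is eventually non-decreasing. The main obstacle is essentially bookkeeping rather than mathematics: the combinatorial core has already been discharged in Lemmas \ref{bv56shdi883j}, \ref{dyu7e8vczg6t}, and \ref{ud8d7vbvu}, and the delicate identity $\sigma^{[j]}(\overline h(n))\sqrt{\ln\overline h(n)}\sim\sigma^{[j+1]}(n)$ that explains why the recursion strictly improves the bound is Lemma \ref{f88hgye9fdj}, which is already packaged inside Lemma \ref{ud8d7vbvu}.
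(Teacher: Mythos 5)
Your proposal matches the paper's proof essentially step for step: the same three-way split of the sum over $m$ at $\overline h(n)$ and $\lceil n/2\rceil$, the same choice $\kappa=2$, the same invocations of Lemmas \ref{bv56shdi883j}, \ref{dyu7e8vczg6t}, and \ref{dhdud77eghej}, and the same use of Lemma \ref{ud8d7vbvu} to consolidate the first two contributions into a multiple of $q^n\rho^{[j+1]}(n)$. You are, if anything, slightly more careful than the paper, which stops after establishing clause~1 of Definition \ref{gbd7df99hhdg6} and does not explicitly check the monotonicity clauses~2 and~3 that you note (correctly) are routine.
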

\begin{proof}
Obviously we have that 
\begin{equation}\label{t12976ukd2f5g} \begin{split}
\priv(n)=\sum_{m=1}^{n-1}\priv(n,m) &=
\sum_{m=1}^{\lceil\frac{n}{2}\rceil}\priv(n,m)+\sum_{m=\lceil\frac{n}{2}\rceil+1}^{n-1} \priv(n,m)\mbox{.}\end{split}
\end{equation}
From Lemma \ref{dyduf78jehdf} we get that 
\begin{equation}
\label{hdydue88hjj}
\sum_{m=\lceil\frac{n}{2}\rceil+1}^{n-1} \priv(n,m)\leq \sum_{m=\lceil\frac{n}{2}\rceil+1}^{n-1}q^{\lceil\frac{n}{2}\rceil}\leq \frac{n}{2}q^{\lceil\frac{n}{2}\rceil} \mbox{.}
\end{equation}

From Lemma \ref{bv56shdi883j} and Lemma \ref{dyu7e8vczg6t}, it follows that there are constants $\ccd, \cce\in\mathbb{R}$ and $n_0\in\mathbb{N}$ such that for all $n> n_0$ we have that
\begin{equation}\label{dujehdkjsrt5} \begin{split}
\sum_{m=1}^{\lceil\frac{n}{2}\rceil}\priv(n,m)&= \sum_{m=1}^{\overline h(n)-1}\priv(n,m)+\sum_{m=\overline h(n)}^{\lceil\frac{n}{2}\rceil}\priv(n,m)\\ &\leq  \ccd\frac{\ln{n}}{\kappa}q^{n}n^{\frac{1}{\kappa}-1}\alpha_j\rho^{[j]}(\overline h(n))+\cce(\ln{n})^{\frac{1}{\kappa}}q^{n}n^{-\frac{1}{\kappa}}\mbox{.}\end{split}
\end{equation}
Let $\kappa=2$. From Lemma \ref{ud8d7vbvu} and (\ref{dujehdkjsrt5}), it follows that there are constants $\ccf\in\mathbb{R}$ and $n_0\in\mathbb{N}$ such that for all $n>n_0$ we have that 
\begin{equation}\label{bdd9ifjeij} 
\sum_{m=1}^{\lceil\frac{n}{2}\rceil}\priv(n,m)\leq  2\ccf\sigma^{[j+1]}{(n)}\sqrt{\ln{n}}q^{n}n^{-\frac{1}{\ktwo}}\mbox{.}
\end{equation}
From (\ref{t12976ukd2f5g}), (\ref{hdydue88hjj}), and (\ref{bdd9ifjeij}) it follows that 
\begin{equation}\label{dru7e8jbbvs}
\priv(n)\leq 2\ccf\sigma^{[j+1]}{(n)}\sqrt{\ln{n}}q^{n}n^{-\frac{1}{\ktwo}} + \frac{n}{2}q^{\lceil\frac{n}{2}\rceil}
\mbox{.}
\end{equation}
We have that \begin{equation}\label{dhyyd7vb2ws}\lim_{n\rightarrow\infty}\frac{nq^{\frac{n}{2}}}{q^{n}n^{-\frac{1}{\ktwo}}}=0\mbox{.}\end{equation}
From (\ref{dru7e8jbbvs}) and (\ref{dhyyd7vb2ws}) it follows that there is a constant $\alpha_{j+1}$ such that 
\[\priv(n)\leq  \alpha_{i+1}\sigma^{[j+1]}{(n)}\sqrt{\frac{\ln{n}}{n}}q^{n}=\alpha_{i+1}\rho^{[j+1]}(n)q^n\mbox{.}\]
This completes the proof.
\end{proof}

Now we can step to the proof of the main theorem of the article.
\begin{proof}(Proof of Theorem \ref{jhd8kjenj5d4g})
From (\ref{dyhc67vzcxk}) it follows that there is a constant $\alpha_1$ such that $\alpha_1\rho^{[1]}(n)\in\up$. Hence Theorem \ref{thuut7678yyhty} implies that for every $j\in\mathbb{N}$ there is a constant $\alpha_j$ such that $\alpha_j\rho^{[j]}(n)\in\up$.
This ends the proof.
\end{proof}

\section*{Acknowledgments}
The author acknowledges support by the Czech Science
Foundation grant GA\v CR 13-03538S and by the Grant Agency of the Czech Technical University in Prague, grant No. SGS14/205/OHK4/3T/14.

\bibliographystyle{siam}
\IfFileExists{biblio.bib}{\bibliography{biblio}}{\bibliography{../!bibliography/biblio}}

\end{document}